\documentclass[12pt]{amsart}
\usepackage[T1]{fontenc}
\usepackage{amssymb}
\pagestyle{plain}

\renewcommand{\int}{\operatorname{Int}}

\newtheorem{theorem}{Theorem}
\newtheorem{cor}[theorem]{Corollary}
\newtheorem{lem}[theorem]{Lemma}
\newtheorem{pro}[theorem]{Proposition}

\author{Piotr Kalemba}
\address{Piotr Kalemba \\
 Institute of Mathematics, University of
Silesia \\
ul. Bankowa 14, 40-007 Katowice}
\email{piotr.kalemba@us.edu.pl}
\author{Szymon Plewik}
\address{Szymon Plewik\\Institute of Mathematics,
University of Silesia, ul. Ban\-ko\-wa 14, 40-007 Katowice}
\email{plewik@math.us.edu.pl}

\setlength{\parskip}{0.5cm}

\sloppy

\begin{document}

\title{On  regular but not completely regular spaces}
\subjclass[2000]{Primary: 54D10; Secondary: 54G20}
\keywords{Niemytzki plane; Songefrey  plane;  Lusin gap}
\date{}
\maketitle
\begin{abstract}
We present how  to obtain non-comparable regular but not completely regular spaces. We analyze a generalization of Mysior's example, extracting  its underlying purely set-theoretic  framework.  
This enables us to build simple counterexamples, using the Niemytzki plane, the Songefrey plane or  Lusin gaps.
\end{abstract}

\section{Introduction}\label{ss1}
  Our discussion  focuses around a question: \textit{How can a completely regular space be extended by a point to only a regular space?} Before A. Mysior's  example,  such a construction seemed quite complicated, compare  \cite{mys}  and \cite{en}. R. Engelking included a description of Mysior's example in the Polish edition of his book \cite[p. 55-56]{eng}. In \cite{ciw} there is considered  a modification of  Mysior's example which  requires no algebraic structure on the space.    We present a purely set-theoretic approach  which enables us to obtain non-comparable examples, such spaces are   $X(\omega, \lambda_1)$ and $X( \lambda_2, \kappa)$, see Section \ref{s1}. This approach is a step towards  a procedure to rearrange   some completely regular spaces   onto only regular ones. One can find a somewhat similar idea in \cite{jon}, compare "the Jones' counterexample machine" in \cite[p. 317]{ciw}. The starting point of our discussion are the cases of  completely regular spaces which are not normal.   For example, subspaces of the Niemytzki plane are examined in  \cite{cho} or \cite{ss},  some $\Psi$-spaces are studied in \cite{hh},  also  the Songefrey  plane is commentated in \cite{ss}. 
The key idea of our construction of counterexamples looks roughly as follows. Start from a completely regular space $X$,  which is not normal. In fact, we need that $ X $  contains countable many pairwise disjoint closed subsets which,  even after removal from each of them a small subset, cannot be separated by open sets.  By numbering these closed sets as $\Delta_X(k)$ and assuming that the collections of small sets form proper ideals $I_X(k)$, we should check that   the property $(*)$ is fulfilled. Copies of  $X$ are numbered by integers and then the $k$-th copy is glued along the set  $\Delta_X(k)$ to  the $(k-1)$-copy, moreover copies of sets $\Delta_X(m)$, for $k\not=n\not=k-1$, are removed from the   $k$-th copy.  As a result we get the completely regular space $ \textbf{Y}_X $, which has a one-point extension to the regular space which is not completely regular. 

In fact, given a completely regular space X, which we do not know whether it has one-point extension to the space which is only regular, we can build a space $\textbf{Y}_X$ which has such an extension. A somewhat similar method was presented in \cite{jon}.  For this reason, we  look for ways of comparing  such spaces.
Following the concept of topological ranks,   compare \cite[p. 112]{kur} or \cite[p. 24]{sie}, which was developed in Polish School of Mathematics,  we say that spaces $X$ and $Y$ have non-comparable \textit{regularity ranks},  whenever $X$  and $Y$ are regular but not completely regular and there does not exist a regular but not completely regular space $Z$ such that $Z$ is homeomorphic to a subspace of $X$ and  $Z$ is homeomorphic to a subspace of  $Y$.

\section{On  Mysior's example}\label{s1}
We modify the approach carried out in \cite{ciw}, which
 consists in a generalization  of Mysior's example,  compare \cite{mys}.  Despite the fact that our arguments resemble those used in \cite{ciw}, we believe that  this presentation is a bit simpler 
and enables us to construct some non-homeomorphic examples, for example  spaces $X(\lambda, \kappa)$.  
  Let $\kappa$ be an uncountable cardinal and $\{A(k): k \in \mathbb Z \}$ be a countable  infinite partition of $\kappa$ into  pairwise disjoint subsets of the cardinality $\kappa$, where $\mathbb Z$ stands for the integers.  
 Denote   the diagonal of the Cartesian product  $\kappa^2$  by $\Delta = \{(x,x): x \in \kappa \}$ and put $\Delta(k)= \Delta \cap A(k)^2$.

Fix an infinite cardinal number $\lambda < \kappa$ and  proper $\lambda^+$-complete ideals $I(k,\lambda)$ on the sets $A(k)$. 
In particular,  we assume that singletons are in $ I(k,\lambda )$, hence   $ H \in I(k,\lambda) $ for any  $H \subseteq A(k)$ such that  $|H| \leqslant  \lambda$.
Consider a topology $\mathcal T$ on $X=\kappa^2$ generated by the basis consisting of all singletons $\{ a \}$, whenever $a \in \kappa^2 \setminus \Delta $, and all the sets $$ \{(x,x)\}\cup (\{x\}\times (A(k-1)\setminus G)) \cup ((A(k+1) \setminus F)\times \{x\}) ,$$ where $x\in A(k)$  and   $| G| < \lambda $ and $F\in I(k+1, \lambda)$. We denote  not-singleton basic sets by 
$\Gamma(x, G,F)$.

\begin{lem}\label{1} Assume that $H \subseteq \Delta (k)  \cap V,$ where $V$ is an open set  in $X$. If the set   $ \{x\in A(k): (x,x) \in H\}$ does not belong to the ideal $ I(k, \lambda)$, then the difference  $\Delta(k-1)\setminus cl_X (V)$ has the cardinality less than $\lambda$.
 \end{lem}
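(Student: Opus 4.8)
The plan is to reduce the statement to a purely combinatorial counting argument that exploits the $\lambda^+$-completeness of the ideal $I(k,\lambda)$ together with the fact that the ``holes'' $G$ appearing in the vertical parts of the basic sets have cardinality strictly below $\lambda$.

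First I would fix notation. Write $S=\{x\in A(k): (x,x)\in H\}$, so that by hypothesis $S\notin I(k,\lambda)$. For each $x\in S$ the point $(x,x)$ lies in the open set $V$, hence $V$ contains a basic neighbourhood $\Gamma(x,G_x,F_x)$ with $|G_x|<\lambda$ and $F_x\in I(k+1,\lambda)$; in particular the vertical part $\{x\}\times(A(k-1)\setminus G_x)$ is contained in $V$. Thus for every $y\in A(k-1)\setminus G_x$ the isolated off-diagonal point $(x,y)$ belongs to $V$.

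The main step is a closure criterion for the points of $\Delta(k-1)$. For $y\in A(k-1)$ set $S_y=\{x\in S: y\notin G_x\}$, equivalently $S_y=\{x\in S:(x,y)\in V\}$. I claim that if $S_y\notin I(k,\lambda)$ then $(y,y)\in\cl_X(V)$. Indeed, a basic neighbourhood of $(y,y)$ has the form $\Gamma(y,G',F')$ with $F'\in I(k,\lambda)$, and its horizontal part is $(A(k)\setminus F')\times\{y\}$. Since $S_y\notin I(k,\lambda)$ while $F'\in I(k,\lambda)$, the set $S_y\setminus F'$ is nonempty; choosing $x$ in it yields a point $(x,y)\in V\cap\Gamma(y,G',F')$. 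As every basic neighbourhood of $(y,y)$ meets $V$, we get $(y,y)\in\cl_X(V)$. Consequently
$$\Delta(k-1)\setminus\cl_X(V)\subseteq\{(y,y): y\in A(k-1),\ S_y\in I(k,\lambda)\},$$
so it suffices to bound the cardinality of $Y_0=\{y\in A(k-1): S_y\in I(k,\lambda)\}$.

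The heart of the argument, and the step I expect to be the main obstacle, is showing $|Y_0|<\lambda$. Suppose instead that $|Y_0|\geqslant\lambda$ and choose $Y\subseteq Y_0$ with $|Y|=\lambda$. By $\lambda^+$-completeness the union $\bigcup_{y\in Y}S_y$ still belongs to $I(k,\lambda)$. On the other hand $\bigcup_{y\in Y}S_y=\{x\in S: Y\not\subseteq G_x\}$, and since $|G_x|<\lambda=|Y|$ no $G_x$ can contain $Y$; hence this union equals all of $S$. This forces $S\in I(k,\lambda)$, contradicting the hypothesis, so $|Y_0|<\lambda$, and the inclusion above gives $|\Delta(k-1)\setminus\cl_X(V)|<\lambda$. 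The delicate point is the matching of the two smallness parameters governed by $\lambda$: the exclusion sets $G_x$ must be smaller than the index set $Y$ so that the fibres $S_y$ cover $S$, while the completeness of the ideal must be exactly strong enough to absorb a union of $\lambda$ many members of $I(k,\lambda)$.
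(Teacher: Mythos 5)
Your proof is correct. It uses the same two ingredients as the paper's proof --- the $\lambda^+$-completeness of $I(k,\lambda)$ and the bound $|G|<\lambda$ on the vertical exclusion sets --- but runs the counting in the opposite order. The paper argues by contradiction: it fixes $\lambda$ many points $(b_\alpha,b_\alpha)$ of $\Delta(k-1)$ outside $cl_X(V)$ together with basic neighbourhoods $\Gamma(b_\alpha,G_\alpha,F_\alpha)$ missing $V$, applies completeness to $\bigcup_\alpha F_\alpha\in I(k,\lambda)$ to find a single $x\in S$ with $(x,b_\alpha)\notin V$ for every $\alpha$, and then uses $|G_x|<\lambda$ to find some $b_\alpha$ hit by the vertical part of a neighbourhood of $(x,x)$ contained in $V$. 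You instead fix the neighbourhoods $\Gamma(x,G_x,F_x)\subseteq V$ for all $x\in S$ at the outset, isolate the closure criterion ``$S_y\notin I(k,\lambda)\Rightarrow(y,y)\in cl_X(V)$'', and apply completeness to the union of the fibres $S_y$; the identity $\bigcup_{y\in Y}S_y=S$ for any $Y$ of size $\lambda$ is where $|G_x|<\lambda$ enters. The fibrewise version has the small advantage of exhibiting an explicit superset $\{(y,y): S_y\in I(k,\lambda)\}$ of $\Delta(k-1)\setminus cl_X(V)$ whose size is bounded by a purely combinatorial fact about the assignment $x\mapsto G_x$, independent of the topology. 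One cosmetic imprecision: the claimed equivalence $S_y=\{x\in S:(x,y)\in V\}$ is only an inclusion from left to right (an isolated point $(x,y)$ may lie in $V$ even when $y\in G_x$), but your argument only uses that inclusion, so nothing is affected.
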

\begin{proof} Suppose that  a set $\{(b_\alpha,b_\alpha): \alpha < \lambda \} \subseteq \Delta(k-1)$ of the cardinality $\lambda$ is disjoint from $ cl_X(V)$. For each $\alpha < \lambda$, fix a basic set  
$\Gamma(b_\alpha, G_\alpha, F_\alpha)$ disjoint from $V$, where $F_\alpha \in I(k, \lambda)$. The ideal  $I(k, \lambda)$ is  $\lambda^+$-complete and the set $\{x\in A(k): (x,x) \in H\} $ does not belong to this ideal. So, there exists  a point  $(x,x) \in H$ such that $$x\in A(k)\setminus \bigcup \{ F_\alpha : \alpha < \lambda \}.$$  Therefore  $$(x, b_\alpha) \in (A(k)\setminus F_\alpha) \times \{b_\alpha\} \subseteq  \Gamma(b_\alpha, G_\alpha, F_\alpha)$$ for every $\alpha < \lambda$. Fix  a basic set $\Gamma(x, G_x, F_x) \subseteq V$ and $\alpha < \lambda$ such that 
$b_\alpha \in A(k-1)\setminus G_x$. We get  $(x, b_\alpha) \in \{x\}\times (A(k-1)\setminus G_x) \subseteq V $,  a contradiction.  \end{proof}

\begin{cor} The space $X$  is completely regular, but not normal. \end{cor}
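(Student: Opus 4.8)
The plan is to prove the two assertions separately. For complete regularity I would show that the given basis consists of clopen sets, so that $X$ is a zero-dimensional $T_1$ space and hence Tychonoff; for non-normality I would exhibit two disjoint closed sets whose separation is blocked by Lemma \ref{1}.

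For the first part, every off-diagonal point is isolated, so each singleton $\{a\}$ with $a\in\kappa^2\setminus\Delta$ is clopen; moreover every point is closed, since the only diagonal point of any basic set $\Gamma(z,G',F')$ is its centre $(z,z)$, so $X$ is $T_1$. It then remains to check that each $\Gamma(x,G,F)$, with $x\in A(k)$, is closed. Given $q\notin\Gamma(x,G,F)$, I would produce a basic neighborhood of $q$ missing $\Gamma(x,G,F)$. If $q$ is off-diagonal, its own singleton works. If $q=(z,z)$ with $z\in A(m)$ and $z\neq x$, then $(z,z)\notin\Gamma(x,G,F)$, and two basic sets $\Gamma(z,G',F')$ and $\Gamma(x,G,F)$ can meet only through a single ``bridge'' coordinate: when $m=k+1$ the vertical ray of $\Gamma(z,\cdot)$ could pass through $(z,x)$, and when $m=k-1$ the horizontal ray could pass through $(x,z)$. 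In either case I would enlarge $G'$ or $F'$ by the one point $x$ — permissible since $|G'|<\lambda$ and singletons lie in every $I(\cdot,\lambda)$ — to make $\Gamma(z,G',F')$ disjoint from $\Gamma(x,G,F)$. Thus the basis is clopen, $X$ is zero-dimensional, and the characteristic functions of clopen sets witness complete regularity.

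For non-normality, the sets $\Delta(0)$ and $\Delta(1)$ are disjoint, because $A(0)\cap A(1)=\emptyset$, and closed, again because the only diagonal point of a basic set is its centre. Suppose toward a contradiction that disjoint open sets $U\supseteq\Delta(1)$ and $W\supseteq\Delta(0)$ separate them. I would apply Lemma \ref{1} with $k=1$, $H=\Delta(1)$ and $V=U$: since $\{x\in A(1):(x,x)\in\Delta(1)\}=A(1)\notin I(1,\lambda)$, the ideal being proper, the difference $\Delta(0)\setminus\cl_X(U)$ has cardinality less than $\lambda$. But $W$ is open with $W\cap U=\emptyset$, hence $W\cap\cl_X(U)=\emptyset$; as $\Delta(0)\subseteq W$ this gives $\Delta(0)\subseteq\Delta(0)\setminus\cl_X(U)$, forcing $|\Delta(0)|<\lambda$. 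This contradicts $|\Delta(0)|=|A(0)|=\kappa>\lambda$, so $X$ is not normal.

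I expect the main obstacle to be the clopenness verification, specifically the bookkeeping of the two adjacent-block cases $m=k\pm1$ where the neighborhoods can share a single bridge point; the rest of the first part is routine, and the non-normality follows at once from Lemma \ref{1} once the pair $\Delta(0),\Delta(1)$ is chosen.
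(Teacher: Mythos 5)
Your proposal is correct and follows the same route as the paper: complete regularity via the observation that the basic sets are clopen (so $X$ is zero-dimensional and $T_1$), and non-normality by applying Lemma \ref{1} to a pair of consecutive diagonal pieces, which the paper does for general $\Delta(k+1)$, $\Delta(k)$ where you take $\Delta(1)$, $\Delta(0)$. Your extra bookkeeping for the two adjacent-block cases $m=k\pm 1$ in the clopenness check is sound and simply fills in details the paper leaves implicit.
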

\begin{proof} The base consists of closed-open sets and one-points subsets of $X$ are closed. So $X$, being  a zero-dimensional T$_1$   space, is completely regular.  
Subsets $\Delta(k+1)$ and $\Delta (k)$ are closed and disjoint. By Lemma \ref{1}, if  a set $V\subseteq X$ is  open and $ \Delta(k +1) \subseteq V$, then  
$cl_X(V) \cap \Delta(k)\not=\emptyset$, which implies  that $X$ is not a normal space. \end{proof}

\begin{pro}\label{3} Assume that  the cardinal $\lambda$ has an uncountable cofinality. If  $f:X\to \mathbb R$ is a continuous real valued function, then for  any point $x \in \kappa$ there exists a basic set $\Gamma(x, G_x, F_x)$ such that the function $f$ is constant on it.    \end{pro}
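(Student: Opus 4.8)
The plan is to reduce everything to the diagonal point $(x,x)$, since for $x\in\kappa$ every non-diagonal point is isolated and the only interesting neighborhoods are the basic sets $\Gamma(x,G,F)$. Set $r=f(x,x)$, where $x\in A(k)$, and aim to produce a single such neighborhood of $(x,x)$ on which $f$ takes only the value $r$. First I would exploit continuity of $f$ at $(x,x)$: for each integer $n\geqslant 1$ there is a basic set $\Gamma(x,G_n,F_n)$, with $|G_n|<\lambda$ and $F_n\in I(k+1,\lambda)$, whose image under $f$ lies in the interval $(r-\tfrac1n,\,r+\tfrac1n)$.

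Next I would treat the vertical and horizontal directions separately and collect the countably many witnesses. In the vertical direction every point $(x,y)$ with $y\in A(k-1)$ is isolated, and the inclusion $f(\Gamma(x,G_n,F_n))\subseteq(r-\tfrac1n,r+\tfrac1n)$ forces $\{y\in A(k-1): |f(x,y)-r|\geqslant\tfrac1n\}\subseteq G_n$. Hence the set $\{y\in A(k-1): f(x,y)\neq r\}$ is contained in $\bigcup_{n} G_n$. In the horizontal direction the same neighborhoods give $\{z\in A(k+1): |f(z,x)-r|\geqslant\tfrac1n\}\subseteq F_n$, so $\{z\in A(k+1): f(z,x)\neq r\}\subseteq\bigcup_n F_n$.

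Then I would verify that the two accumulated bad sets are still small enough to be absorbed into a single basic neighborhood. For the horizontal set this is immediate: since $\omega\leqslant\lambda$ and $I(k+1,\lambda)$ is $\lambda^+$-complete, the countable union $F_x:=\bigcup_n F_n$ again belongs to $I(k+1,\lambda)$. For the vertical set this is exactly where the hypothesis $\cf(\lambda)>\omega$ enters: each $|G_n|<\lambda$, so the countable sequence of cardinals $|G_n|$ cannot be cofinal in $\lambda$ and its supremum stays below $\lambda$, whence $G_x:=\bigcup_n G_n$ has cardinality less than $\lambda$. This cardinal-arithmetic step is the crux of the argument and the only place where uncountable cofinality is used; without it a countable union of sets of size $<\lambda$ could exhaust $\lambda$ and the conclusion would fail.

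Finally I would assemble the neighborhood $\Gamma(x,G_x,F_x)$ and check directly that $f$ is constant on it. By construction $f(x,x)=r$, every surviving vertical point $(x,y)$ with $y\in A(k-1)\setminus G_x$ satisfies $f(x,y)=r$, and every surviving horizontal point $(z,x)$ with $z\in A(k+1)\setminus F_x$ satisfies $f(z,x)=r$; since these exhaust the points of $\Gamma(x,G_x,F_x)$, the function $f$ is identically $r$ there, as required.
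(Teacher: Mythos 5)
Your proof is correct and follows essentially the same route as the paper's: take basic neighborhoods $\Gamma(x,G_n,F_n)$ mapping into $(r-\frac1n,r+\frac1n)$, form the unions $G_x=\bigcup_n G_n$ and $F_x=\bigcup_n F_n$, and use uncountable cofinality (for $|G_x|<\lambda$) together with $\lambda^+$-completeness (for $F_x\in I(k+1,\lambda)$) to see that $\Gamma(x,G_x,F_x)=\bigcap_n\Gamma(x,G_n,F_n)$ is still a basic set on which $f$ is constant. Your write-up is merely more explicit about the vertical/horizontal bookkeeping that the paper leaves implicit.
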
  
\begin{proof} Without loss of generality, we can assume that $f(x,x)=0$.  For each $n>0$, fix a base set 
$\Gamma(x, G_n,F_n) \subseteq f^{-1}((\frac{-1}{n},\frac1n))$. Then put $G_x= \cup \{G_n: n>0\}$ and $F_x= \cup \{F_n: n>0\}$. Since $\lambda$ has an uncountable cofinality,  we get that the set  $ \Gamma(x, G_x,F_x)$ belongs to the base. Obviously, if $$\textstyle (a,b) \in \cap \{\Gamma(x, G_n, F_n): n>0\}= \Gamma(x, G_x,F_x) ,$$ then $f(a,b)=0$. \end{proof}

When $\lambda$ has the  countable cofinality, then the above proof also works, but then the set $G_x = \cup \{G_n: n>0\} $ may have the cardinality $\lambda$, and therefore $\Gamma(x, G_x,F_x)$  does not necessarily belong to the base, and also it could be not open. Furthermore, any continuous real valued function must also be constant onto other large  subsets of X.

\begin{lem}\label{l2} Let $k \in \mathbb Z$. If  $f:X\to \mathbb R$ is a continuous real valued function, then for any $\varepsilon >0$ there   exists a real number $a$ such that  $f[\Delta(k-1)] \subseteq[a,a+3\cdot \varepsilon]$ for all but less than $\lambda$ many  points $(x,x) \in \Delta(k-1)$.  \end{lem}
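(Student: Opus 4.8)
The plan is to run a pigeonhole argument on the values of $f$ along $\Delta(k)$ and then transfer the conclusion to $\Delta(k-1)$ by means of Lemma \ref{1}. First I would fix $\varepsilon>0$ and partition the real line into the consecutive half-open intervals $[n\varepsilon,(n+1)\varepsilon)$, $n\in\mathbb Z$. Pulling these back through $f$ along the diagonal yields a countable partition $A(k)=\bigcup_{n\in\mathbb Z}H_n$, where $H_n=\{x\in A(k):f(x,x)\in[n\varepsilon,(n+1)\varepsilon)\}$. Since the ideal $I(k,\lambda)$ is $\lambda^+$-complete and $\lambda$ is infinite, any countable union of its members stays in the ideal; as $I(k,\lambda)$ is proper, the set $A(k)$ is not in the ideal, so at least one piece, say $H_{n_0}$, does not belong to $I(k,\lambda)$. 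Note that this route uses neither Proposition \ref{3} nor the hypothesis on the cofinality of $\lambda$.

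Next I would feed this large piece into Lemma \ref{1}. Put $U=((n_0-1)\varepsilon,(n_0+2)\varepsilon)$, an open interval that contains $[n_0\varepsilon,(n_0+1)\varepsilon)$, and set $V=f^{-1}(U)$, which is open by continuity of $f$. Writing $H=\{(x,x):x\in H_{n_0}\}\subseteq\Delta(k)\cap V$, the set $\{x\in A(k):(x,x)\in H\}=H_{n_0}$ is not in $I(k,\lambda)$, so Lemma \ref{1} applies and gives that $\Delta(k-1)\setminus \cl_X(V)$ has cardinality less than $\lambda$.

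Finally I would read off the values of $f$ on the closure. By continuity $\cl_X(V)=\cl_X(f^{-1}(U))\subseteq f^{-1}(\cl(U))$, and $\cl(U)=[(n_0-1)\varepsilon,(n_0+2)\varepsilon]$ is an interval of length $3\varepsilon$. Hence, setting $a=(n_0-1)\varepsilon$, every point $(y,y)\in\Delta(k-1)\cap\cl_X(V)$ satisfies $f(y,y)\in[a,a+3\varepsilon]$, and by the previous paragraph this covers all but fewer than $\lambda$ points of $\Delta(k-1)$, which is the assertion. The one step requiring care is the bookkeeping that produces the constant $3$: choosing a symmetric open interval with an $\varepsilon$-buffer on each side of the selected piece is what makes $\cl(U)$ have length exactly $3\varepsilon$ while still strictly containing the half-open pigeonhole interval, so that $H_{n_0}$ really lands in $V$. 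I expect the genuine content to lie in the correct invocation of Lemma \ref{1}, the remainder being the completeness-plus-pigeonhole setup and the elementary interval arithmetic.
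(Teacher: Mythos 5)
Your proposal is correct and follows essentially the same route as the paper's proof: a pigeonhole argument over a countable family of length-$\varepsilon$ intervals, using $\lambda^+$-completeness and properness of $I(k,\lambda)$ to find a piece of $A(k)$ outside the ideal, followed by an application of Lemma \ref{1} to $V=f^{-1}$ of the open length-$3\varepsilon$ interval and the inclusion $cl_X(V)\subseteq f^{-1}$ of its closure. The only cosmetic difference is that the paper phrases the pigeonhole with closed intervals $[b+q\varepsilon,b+(q+1)\varepsilon]$ for an arbitrary offset $b$, whereas you use the half-open partition with $b=0$; the content is identical.
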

\begin{proof} Fix a real number $b$ and $\varepsilon >0$. The ideal $I(k, \lambda)$ is $\lambda^+$-complete, so we can choose  an integer $q \in \mathbb Z$ such that the subset  $$ \{ x\in A(k): f(x,x) \in[b+q \cdot \varepsilon, b+(q+1) \cdot \varepsilon ]\}$$ does not belong to  $I(k, \lambda)$. Use Lemma \ref{1}, putting  $a= b+(q-1) \cdot \varepsilon$ and $H= f^{-1}([a+\varepsilon, a+2\cdot \varepsilon ]) \cap \Delta (k)$ and $V=f^{-1}((a, a+3\cdot \varepsilon ))$. Since $cl_X (V) \subseteq f^{-1}([a, a+3\cdot \varepsilon ])$, the proof is completed.  
\end{proof}
 
\begin{cor}\label{c3}
If  $f:X\to \mathbb R$ is a continuous real valued function, then for any $k \in \mathbb Z$ there   exists a real number $a_k$ such that  $f(x,x) = a_k$  for all but  $\lambda$ many  points $(x,x) \in \Delta(k)$. Moreover, if $\lambda$ has uncountable cofinality, then  $f(x,x) = a_k$  for all  but less than $\lambda$ many   $x \in A(k)$.
 \end{cor}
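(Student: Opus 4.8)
The plan is to deduce Corollary \ref{c3} from Lemma \ref{l2} by an intersection argument over a sequence of shrinking $\varepsilon$'s. The statement of Corollary \ref{c3} is about $\Delta(k)$, while Lemma \ref{l2} as stated produces a value $a$ controlling $f$ on $\Delta(k-1)$; so first I would note that by re-indexing (applying Lemma \ref{l2} with $k+1$ in place of $k$) we get, for each $\varepsilon>0$, a real number $a=a(\varepsilon)$ such that $f(x,x)\in[a,a+3\varepsilon]$ for all but fewer than $\lambda$ many $(x,x)\in\Delta(k)$.

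Next I would run the standard nested-interval construction. Take $\varepsilon_n=3^{-n}$ (or any summable-to-a-point sequence) and apply Lemma \ref{l2} to obtain closed intervals $J_n=[a(\varepsilon_n),a(\varepsilon_n)+3\varepsilon_n]$, each of which contains $f(x,x)$ for all but fewer than $\lambda$ many $(x,x)\in\Delta(k)$. The key observation is that the exceptional set for each $n$ has cardinality less than $\lambda$, and the corresponding subsets of $A(k)$ lie in the ideal $I(k,\lambda)$ since that ideal contains every subset of $A(k)$ of size at most $\lambda$. Because $I(k,\lambda)$ is $\lambda^+$-complete, the union over all $n$ of these countably many exceptional sets is again a member of $I(k,\lambda)$. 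Hence outside one ideal set we have $f(x,x)\in\bigcap_n J_n$ for all the remaining points.

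I would then argue that the intervals $J_n$ can be chosen (or shown) to have a common point: each $J_n$ meets the next, since both contain $f(x,x)$ for some common point $(x,x)$ lying outside both exceptional sets (such a point exists precisely because each exceptional set is proper in the non-ideal sense, i.e.\ its complement is nonempty as $\{x:(x,x)\in\Delta(k)\}\notin I(k,\lambda)$). The diameters $3\varepsilon_n\to 0$, so $\bigcap_n J_n$ is a single real number, which I name $a_k$. Thus $f(x,x)=a_k$ for all $(x,x)\in\Delta(k)$ outside a set in $I(k,\lambda)$, and since every subset of $A(k)$ of cardinality at most $\lambda$ lies in $I(k,\lambda)$, this yields the first assertion: $f(x,x)=a_k$ for all but at most $\lambda$ many points.

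For the refinement when $\cf(\lambda)>\omega$, I would invoke $\lambda^+$-completeness more carefully together with the uncountable-cofinality hypothesis. The point is that in the first part the exceptional set at stage $n$ is only known to lie in the ideal, which a priori permits cardinality up to $\lambda$; but with $\cf(\lambda)>\omega$ a countable union of sets each of size less than $\lambda$ again has size less than $\lambda$. So I would strengthen the application of Lemma \ref{l2} to record that each exceptional set has cardinality \emph{less than} $\lambda$, take the countable union, and use uncountable cofinality to conclude that this union still has cardinality less than $\lambda$. That upgrades the conclusion to: $f(x,x)=a_k$ for all but fewer than $\lambda$ many $x\in A(k)$. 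The main obstacle I anticipate is precisely the bookkeeping between ``membership in $I(k,\lambda)$'' and ``cardinality less than $\lambda$'': these coincide comfortably only under the cofinality assumption, so the two clauses of the corollary must be handled with slightly different uses of completeness, and I must make sure the nested intervals genuinely share a point rather than merely having nonempty pairwise intersections.
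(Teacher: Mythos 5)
Your proposal is correct and is essentially the paper's own proof: the authors' argument is precisely to apply Lemma \ref{l2} with $k+1$ in place of $k$ and $\varepsilon=\frac1n$ for each $n>0$, then intersect the resulting intervals, with the exceptional sets accumulating to one of size at most $\lambda$ in general (since each has size less than $\lambda$ and there are countably many) and of size less than $\lambda$ when $\cf(\lambda)>\omega$. Your added care about why the intervals share a common point (a point of $\Delta(k)$ outside the union of all exceptional sets exists because that union is small while $A(k)\notin I(k,\lambda)$) fills in exactly the detail the paper leaves to the reader.
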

\begin{proof} Apply Lemma \ref{l2}, substituting consecutively $ \frac1n$ for  $\varepsilon $, for $n>0$,  and $k+1$ for $k$.   \end{proof}

\begin{theorem} \label{6} If $f:X\to \mathbb R$ is a continuous real valued function, then there exists $a\in \mathbb R$ such that $f(x,x)=a$ for all but  $\lambda$ many $x\in \kappa$.
Moreover, when $\lambda$ has an uncountable cofinality, then $f(x,x)=a$ for all but less than $\lambda$ many $x\in \kappa$.  
\end{theorem}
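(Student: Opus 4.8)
The plan is to reduce everything to Corollary \ref{c3}. That corollary supplies, for each $k \in \mathbb{Z}$, a real $a_k$ together with the set $B_k := \{x \in A(k) : f(x,x) = a_k\}$, whose complement $A(k)\setminus B_k$ has cardinality at most $\lambda$ (and less than $\lambda$ when $\lambda$ has uncountable cofinality). The theorem then follows from two facts: (i) all the reals $a_k$ coincide with a single value $a$, and (ii) the exceptional set $\bigcup_k (A(k)\setminus B_k)$ has the asserted size. Fact (ii) is pure cardinal arithmetic: a countable union of sets of size at most $\lambda$ has size at most $\omega\cdot\lambda = \lambda$, and if $\lambda$ has uncountable cofinality a countable union of sets each of size less than $\lambda$ still has size less than $\lambda$. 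So the entire content is fact (i).

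The first thing I would record is that $B_k \notin I(k,\lambda)$. Indeed, since singletons lie in the $\lambda^+$-complete ideal $I(k,\lambda)$, the set $A(k)\setminus B_k$ of size at most $\lambda$ belongs to $I(k,\lambda)$; were $B_k$ also in the ideal, we would get $A(k)\in I(k,\lambda)$, contradicting properness. This is the only use of properness, and it is the engine of the argument. To prove $a_k = a_{k-1}$ I would then fix $\varepsilon > 0$ and use continuity directly (rather than the exact constancy of Proposition \ref{3}, so that the argument is uniform in the cofinality of $\lambda$). For each $x\in B_k$, continuity at $(x,x)$ gives a basic neighborhood $\Gamma(x,G_x,F_x)$, with $|G_x| < \lambda$ and $F_x\in I(k+1,\lambda)$, on which $|f - a_k| < \varepsilon$; symmetrically, for each $y\in B_{k-1}$ there is $\Gamma(y,G_y,F_y)$, with $|G_y| < \lambda$ and $F_y\in I(k,\lambda)$, on which $|f - a_{k-1}| < \varepsilon$.

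The goal is to produce a single off-diagonal point $(x,y)$ with $x\in B_k$, $y\in B_{k-1}$ lying in both neighborhoods: since $(x,y)$ would then belong to $\{x\}\times(A(k-1)\setminus G_x)\subseteq \Gamma(x,G_x,F_x)$ and to $(A(k)\setminus F_y)\times\{y\}\subseteq \Gamma(y,G_y,F_y)$, both estimates apply and $|a_k - a_{k-1}| < 2\varepsilon$. Letting $\varepsilon\to 0$ forces $a_k = a_{k-1}$, and since the equalities $a_k = a_{k-1}$ chain together across $k\in\mathbb{Z}$, all $a_k$ equal a common value $a$, giving fact (i).

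The combinatorial heart, which I expect to be the only real obstacle, is the existence of that crossing point, and it is exactly balanced between the two sides. I would choose $S\subseteq B_{k-1}$ with $|S| = \lambda$ (possible, as $|B_{k-1}| = \kappa > \lambda$), form $F := \bigcup_{y\in S} F_y$, which still lies in $I(k,\lambda)$ because a $\lambda^+$-complete ideal is closed under unions of $\lambda$ of its members, and then pick $x\in B_k\setminus F$, which is nonempty precisely because $B_k\notin I(k,\lambda)$ while $F\in I(k,\lambda)$. This $x$ satisfies $x\notin F_y$ for every $y\in S$. Finally, since $|G_x| < \lambda = |S|$, some $y\in S$ avoids $G_x$, and this $(x,y)$ is the desired crossing point. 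The delicate thing to get right is the matching of parameters: the downward sets $G_x$ are controlled by the bound $|G_x| < \lambda$, whereas the upward sets $F_y$ are controlled only by membership in the ideal, and taking exactly $\lambda$-many $y$'s is what lets $\lambda^+$-completeness neutralize all the $F_y$ at once while still leaving room to dodge a single $G_x$.
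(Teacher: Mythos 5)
Your proposal is correct and follows the same decomposition as the paper: both obtain the constants $a_k$ from Corollary \ref{c3}, observe that the set where $f=a_k$ on $\Delta(k)$ is not in $I(k,\lambda)$, and reduce everything to showing $a_k=a_{k-1}$. The only difference is cosmetic: the paper proves $a_k=a_{k-1}$ by applying Lemma \ref{1} to $V=f^{-1}(\mathbb J)$ for an interval $\mathbb J$ around $a_k$, whereas you re-derive inline, via an explicit $\varepsilon$-argument, exactly the crossing-point combinatorics ($\lambda^+$-completeness neutralizing the sets $F_y$ against the bound $|G_x|<\lambda$) that constitutes the paper's proof of Lemma \ref{1}.
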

\begin{proof}
We shall to prove that the numbers $a_k$  which appear in Corollary \ref{c3} are equal. To do this, suppose that $a_k \not= a_{k-1}$ for some $k\in \mathbb Z$. Choose disjoint open  intervals $\mathbb J$ and $\mathbb I$ such that  $ a_k \in \mathbb J$ and $a_{k-1}\in \mathbb I$. Apply Lemma \ref{1}, taking $ H=\{(x,x) \in \Delta(k): f(x,x) = a_k \}$ and $V= f^{-1} (\mathbb J)$.    Since $cl_X(V) \cap f^{-1} (\mathbb I) =\emptyset$,  we get  $f(x,x) \not= a_{k-1} $ for all but less than $\lambda$ many $x\in A(k-1)$, a contradiction. \end{proof}

Knowing  infinite cardinal numbers $\lambda < \kappa$ and  proper $\lambda^+$-complete ideals $I(k,\lambda)$ on sets $A(k)$,  one can 
extend the space $X$  by one  or two points  so as to get a regular space which is not completely  regular. This is a standard construction, compare \cite{jon}, \cite{mys} and \cite{ciw} or \cite[Example 1.5.9]{eng}, so we will describe it briefly. Fix  points $+\infty$ and $-\infty$ that do not belong to $X$. On the set $X^*= X \cup\{-\infty, +\infty\}$ we introduce the following topology. Let open sets in $X$ be open in $X^*$, too. But  the sets  $$\mathcal V^+_m=\{+\infty\} \cup \bigcup\{A(n) \times \kappa: n>m\}$$  form a base  at the point  $ +\infty $ and  the sets  $$\mathcal V^-_m=\{-\infty\} \cup \bigcup\{A(n) \times \kappa: n\leqslant m\}\setminus \Delta (m)$$  form a base  at the point  $ -\infty .$ Thus we have 
$$\Delta (m) = cl_{X^*}(\mathcal V^+_m) \cap cl_{X^*}(\mathcal V^-_m) = cl_{X}(\mathcal V^+_m \cap X) \cap cl_{X}(\mathcal V^-_m \cap X),$$ which gives  that the space $X^*$ is regular and not completely regular. Indeed, consider a closed subset $D \subseteq X^*$  and a point $p\in X^* \setminus D$.  When $p\in X$, then $p$ has a closed-open neighborhood in $X^*$ which is  disjoint with $D$.  When $p=+\infty$, then    consider the basic set $\mathcal V^+_m$ which is disjoint with $D$ and check $cl_{X^*}(\mathcal V^+_{m+1}) \subseteq \mathcal V^+_m. $ Analogously, 
 when $p=-\infty$, then    consider the basic set $\mathcal V^-_m$ which is disjoint with $D$ and check $cl_{X^*}(\mathcal V^-_{m-1}) \subseteq \mathcal V^-_m. $
By Theorem \ref{6}, no continuous real valued function separates an arbitrary closed set $\Delta(k)$  from a point $p\in \{+\infty, -\infty\}$.  Hence the space $X^*$ is not completely regular. The same holds for subspaces   $X^*\setminus \{+\infty\}$ and $X^*\setminus \{-\infty\}$. Moreover, if $f: X^* \to \mathbb R$ is a continuous function, then $f(+\infty) = f(-\infty).$

Now for convenience, the above defined  space $X$ is denoted  $ X(\lambda, \kappa)$,  whenever the ideals $ I(\lambda, \kappa)$ consist of sets of the cardinality less than $\lambda$. Assuming $\omega < \lambda_1 < \lambda_2 < \kappa$ we get two (non-comparable) non-homeomorphic spaces   $X(\omega, \lambda_1)$ and $X( \lambda_2, \kappa)$, since the first one has the cardinality $\lambda_1.$ But  a subspace of $X( \lambda_2, \kappa)$ of the cardinality $\lambda_1$ is discrete and its closure in $(X(\omega, \lambda_2))^*$, being zero-dimensional, is completely regular. In other words, spaces $(X(\omega, \lambda_1))^*$ and $(X( \lambda_2, \kappa))^*$ have non-comparable regularity ranks.

\section{General approach}  The  analysis conducted above can be generalized using some known counterexamples. We apply such a generalization  to  the Niemytzki plane, cf. \cite[p. 34]{eng} or \cite[pp. 100 - 102]{ss}, the Songefrey's half-open square topology, cf. \cite[pp. 103 - 105]{ss} and special  Isbell-Mrówka spaces (which are also known as $\Psi$-spaces).  

Given a  space $X$ and a closed and discrete subset $\Delta_X\subseteq X$, assume  that $\Delta_X$ can be partitioned onto   pairwise disjoint subsets $\Delta_X(k)$. For each $k\in \mathbb Z$, let  $I_X(k)$ be a proper ideal on $\Delta_X(k)$.  Suppose that the following property is fulfilled:

($*$).  \textit{If a set $V \subseteq X$ is open and the set $\Delta_X(k) \setminus  V$ belongs to $  I_X(k)$, then the set $\Delta_X(k-1) \setminus cl_X(V)$ belongs to $I_X(k-1)$.} 

\noindent Then it is possible to give a general scheme of a construction of a completely regular space $\textbf{Y}=\textbf{Y}_X$, which has one-point extension to a regular space which is not completely regular and two-point extension  to a regular space such that no continuous real valued function separates the extra points, whereas removing  a single point we get a regular space which is not completely regular. 
To get this we put  $$\textbf{x}_k = \begin{cases}
                        (k,x),  \text{ when } x\in X\setminus \Delta_X;\\
                       \{(k,x), (k+1,x)\},  \text{ when  } x\in  \Delta_X(k).\\
                    \end{cases}$$
And then put  $\textbf{Y}_X=  \{\textbf{x}_k: x\in X \mbox{ and } k \in \mathbb Z \}.$
  Endow $\textbf{Y}_X$ with the  topology as follows.
	If $k \in \mathbb Z $ and $ V\subseteq X\setminus \Delta_X$    is an open subset of $X$, then the set $\{\textbf{x}_k: x \in  V\}$ is  open in $\textbf{Y}_X.$ 
	Thus we define neighborhoods of the point $\textbf{x}_k$ where $x \notin \Delta_X$. To define  neighborhoods of the point $\textbf{x}_k$,  where $x \in \Delta_X$, we  use the formula:
If $k \in \mathbb Z$ and   $V \subseteq X$ is an open subset,   then the set 
	$$\{\textbf{x}_k: x \in  V\}\cup \{\textbf{x}_{k+1}: x \in  V\setminus \Delta_X\}$$ is  open  in $\textbf{Y}_X.$ To get a version of $(*)$,
	we  put the following: 
 $\Delta_\textbf{Y}(k) = \{\textbf{x}_k: x \in \Delta_X(k)\}$; 
 $\Delta_\textbf{Y}=\bigcup\{\Delta_X(k): k\in \mathbb Z\}$; 
	 Let $I_\textbf{Y}(k)$ be a proper ideal which consists of sets $\{\textbf{x}_k: x \in A\}$ for $A  \in I_X(k)$;
$\textbf{Y}_k=  \{\textbf{y}_k: y \in X\setminus \Delta_X\}.$
	So,  if $k\in \mathbb Z$,  then $$\Delta_\textbf{Y}(k) = cl_\textbf{Y}(\{\textbf{y}_k: y \in X\setminus \Delta_X\})\cap cl_\textbf{Y}(\{\textbf{y}_{k+1}: y \in X\setminus \Delta_X\}).$$ 
As we can see, the properties of the space $\textbf{Y}_X$ can be automatically rewritten from the relevant properties of $ X $, so we leave  details to the reader.
	
	\begin{pro}\label{p7} Assume that a space $X$ satisfied $(*)$ and the space $\textbf{Y}$ is as above.  If a set $\textbf{V} \subseteq \textbf{Y}$ is open and the set $\Delta_\textbf{Y} (k) \setminus  \textbf{V}$ belongs to $  I_\textbf{Y}(k)$, then the set $\Delta_\textbf{Y} (k-1) \setminus cl_\textbf{Y}(\textbf{V})$ belongs to $I_\textbf{Y}(k-1)$.  \hfill $\Box$
	\end{pro}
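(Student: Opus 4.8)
The plan is to pull everything back to $X$ and invoke property $(*)$ there, using the bijection $A \mapsto \{\textbf{x}_k : x \in A\}$ between subsets of $\Delta_X(k)$ and subsets of $\Delta_\textbf{Y}(k)$, which by construction carries $I_X(k)$ onto $I_\textbf{Y}(k)$. Accordingly I would set $A_k = \{x \in \Delta_X(k) : \textbf{x}_k \notin \textbf{V}\}$ and $B_{k-1} = \{x \in \Delta_X(k-1) : \textbf{x}_{k-1} \notin cl_\textbf{Y}(\textbf{V})\}$. Under this correspondence the hypothesis $\Delta_\textbf{Y}(k) \setminus \textbf{V} \in I_\textbf{Y}(k)$ says exactly that $A_k \in I_X(k)$, and the desired conclusion $\Delta_\textbf{Y}(k-1) \setminus cl_\textbf{Y}(\textbf{V}) \in I_\textbf{Y}(k-1)$ says exactly that $B_{k-1} \in I_X(k-1)$. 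So the whole proposition reduces to producing, from $\textbf{V}$, an open $V \subseteq X$ to which $(*)$ applies.

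Next I would build that $V$. Put $V_0 = \{x \in X \setminus \Delta_X : \textbf{x}_k \in \textbf{V}\}$ and $V_1 = \{x \in \Delta_X(k) : \textbf{x}_k \in \textbf{V}\}$, and take $V = V_0 \cup V_1$. I claim $V$ is open in $X$. The set $V_0$ is open because the pure points $\{\textbf{z}_k : z \in X \setminus \Delta_X\}$ carry a copy of $X \setminus \Delta_X$, with the sets $\{\textbf{z}_k : z \in W\}$ ($W \subseteq X \setminus \Delta_X$ open) as a neighbourhood basis, so $V_0$ is the trace of the open set $\textbf{V}$. Each $x \in V_1$ is interior as well: choose a basic neighbourhood $\{\textbf{y}_k : y \in W\} \cup \{\textbf{y}_{k+1} : y \in W \setminus \Delta_X\} \subseteq \textbf{V}$ of $\textbf{x}_k$ and an open $U \ni x$ with $U \cap \Delta_X = \{x\}$ (available since $\Delta_X$ is discrete); then $W \cap U$ is an $X$-neighbourhood of $x$ whose only point of $\Delta_X$ is $x \in V_1$ and whose points outside $\Delta_X$ lie in $V_0$, so $W \cap U \subseteq V$. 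Since $\Delta_X(k) \setminus V = A_k \in I_X(k)$, property $(*)$ yields $\Delta_X(k-1) \setminus cl_X(V) \in I_X(k-1)$.

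It then remains to verify $B_{k-1} \subseteq \Delta_X(k-1) \setminus cl_X(V)$, after which downward closure of $I_X(k-1)$ finishes the proof. Two facts give this. First, because $V_1 \subseteq \Delta_X(k)$ and $\Delta_X(k)$ is closed and disjoint from $\Delta_X(k-1)$, one has $cl_X(V) \cap \Delta_X(k-1) = cl_X(V_0) \cap \Delta_X(k-1)$; that is, the points of $V_1$, thrown in only to trigger $(*)$, create no new limit points on $\Delta_X(k-1)$. Second, if $x \in \Delta_X(k-1) \cap cl_X(V_0)$ then $\textbf{x}_{k-1} \in cl_\textbf{Y}(\textbf{V})$: any basic neighbourhood $\{\textbf{y}_{k-1} : y \in W\} \cup \{\textbf{y}_k : y \in W \setminus \Delta_X\}$ of $\textbf{x}_{k-1}$ reaches into layer $k$ through the pure points $\textbf{y}_k$ with $y \in W \setminus \Delta_X$, and since $W$ meets $V_0$ one such $\textbf{y}_k$ lies in $\textbf{V}$. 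Combining the two, $x \in \Delta_X(k-1) \cap cl_X(V)$ forces $\textbf{x}_{k-1} \in cl_\textbf{Y}(\textbf{V})$, i.e. $x \notin B_{k-1}$, which is the required inclusion. I expect the main obstacle to be precisely the handling of the points of $\Delta_X(k)$: they must be put into $V$ so that $(*)$ can be applied, yet must neither spoil openness of $V$ nor enlarge its closure along $\Delta_X(k-1)$; both difficulties are dissolved by the hypothesis that $\Delta_X$ is closed and discrete, which keeps each $\Delta_X(k)$ closed and lets us isolate its points.
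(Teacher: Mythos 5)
Your proof is correct. The paper itself offers no argument for Proposition \ref{p7} --- it is stated with a $\Box$ and the remark that the properties of $\textbf{Y}_X$ ``can be automatically rewritten'' from those of $X$ --- and your pull-back-to-$X$ argument is exactly the intended route, now with the details supplied: in particular the separation of $V$ into $V_0$ and $V_1$, the use of closedness and discreteness of $\Delta_X$ both to make $V$ open and to keep $cl_X(V_1)\subseteq \Delta_X(k)$ away from $\Delta_X(k-1)$, and the observation that points of $cl_X(V_0)\cap\Delta_X(k-1)$ are reached through the pure level-$k$ points are precisely the steps the reader is expected to check.
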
 
	\begin{pro} If a space $X$ is completely regular, then the space $\textbf{Y}$ is completely regular, too.  \hfill $\Box$ \end{pro}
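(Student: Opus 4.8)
The plan is to establish complete regularity directly, by separating a point from a disjoint closed set with a $[0,1]$-valued map, transporting a Urysohn-type function from $X$ to $\textbf{Y}$ along the obvious projection. It suffices to show: for every $\textbf{p}\in\textbf{Y}$ and every closed $C\subseteq\textbf{Y}$ with $\textbf{p}\notin C$ there is a continuous $f\colon\textbf{Y}\to[0,1]$ with $f(\textbf{p})=0$ and $f\equiv1$ on $C$. That $\textbf{Y}$ is $T_1$ is inherited from $X$ by inspecting the basic sets, so I will concentrate on producing $f$.

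First I would introduce the projection $\pr\colon\textbf{Y}\to X$ given by $\pr(\textbf{x}_k)=x$, and check that it is continuous: for open $U\subseteq X$ the preimage $\pr^{-1}(U)$ is the union of the basic sets of the first kind determined by $U\setminus\Delta_X$ (in every copy) together with the basic sets of the second kind determined by $W=U$, which cover the glued points lying over $U$; hence $\pr^{-1}(U)$ is open. I note for later that $\pr(\textbf{U})=V$ for each basic neighbourhood $\textbf{U}$ used below, so $\pr(\cl_{\textbf{Y}}(\textbf{U}))\subseteq\cl_X(V)$.

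Now fix $\textbf{p}$ and $C$ and pick a basic neighbourhood $\textbf{U}\ni\textbf{p}$ disjoint from $C$; it is enough to produce $f$ with $f(\textbf{p})=0$ and $f\equiv1$ off $\textbf{U}$, since then $f\equiv1$ on $C\subseteq\textbf{Y}\setminus\textbf{U}$. If $\textbf{p}=\textbf{x}_k$ with $x\notin\Delta_X$, then $\textbf{U}=\{\textbf{x}_k:x\in V\}$ for an open $V\subseteq X\setminus\Delta_X$. If $\textbf{p}=\textbf{x}_k$ is a glued point, $x\in\Delta_X(k)$, then $\textbf{U}=\{\textbf{x}_k:x\in V\}\cup\{\textbf{x}_{k+1}:x\in V\setminus\Delta_X\}$ for an open $V\subseteq X$ with $x\in V$; here, since $\Delta_X$ is closed and discrete, I would shrink $V$ so that $V\cap\Delta_X=\{x\}$. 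In either case complete regularity of $X$ furnishes a continuous $g\colon X\to[0,1]$ with $g(\pr(\textbf{p}))=0$ and $g\equiv1$ on $X\setminus V$, and I set $f=g\circ\pr$ on $\textbf{U}$ and $f\equiv1$ on $\textbf{Y}\setminus\textbf{U}$. Then $f(\textbf{p})=g(\pr(\textbf{p}))=0$.

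The one substantial point, which I expect to be the main obstacle, is the continuity of $f$. On the open set $\textbf{U}$ we have $f=g\circ\pr$, which is continuous, and on the open set $\textbf{Y}\setminus\cl_{\textbf{Y}}(\textbf{U})$ it is constantly $1$; so everything reduces to the points $\textbf{q}\in\cl_{\textbf{Y}}(\textbf{U})\setminus\textbf{U}$, where the two prescriptions match exactly when $g(\pr(\textbf{q}))=1$, that is, when $\pr(\textbf{q})\notin V$. Hence the proof comes down to the claim $\cl_{\textbf{Y}}(\textbf{U})\cap\pr^{-1}(V)\subseteq\textbf{U}$. To verify it I would unwind the neighbourhood base: an off-diagonal point has a base of neighbourhoods contained in its own copy, so no off-diagonal point of a copy disjoint from $\textbf{U}$ belongs to $\cl_{\textbf{Y}}(\textbf{U})$, while the off-diagonal points of the copies forming $\textbf{U}$ that project into $V$ already lie in $\textbf{U}$. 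The remaining boundary points are glued points, whose projections lie in $\Delta_X$; since $V\cap\Delta_X$ is either empty (when $\textbf{p}$ is off-diagonal) or the single point $\pr(\textbf{p})$, whose glued representative is in $\textbf{U}$ (when $\textbf{p}$ is glued), none of these projects into $V$ outside $\textbf{U}$. This proves the claim, so $f$ is continuous and separates $\textbf{p}$ from $C$, and $\textbf{Y}$ is completely regular. As promised, the separating function is nothing but the projected separating function of $X$, the glueing contributing only the harmless requirement that $V$ meet the discrete set $\Delta_X$ in at most one point.
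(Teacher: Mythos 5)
Your proof is correct. The paper in fact gives no argument for this proposition --- it is stated with an empty proof box after the remark that the properties of $\textbf{Y}_X$ ``can be automatically rewritten'' from those of $X$ --- and your construction (lift a Urysohn function of $X$ along the projection $\pr$, shrink $V$ so that it meets the closed discrete set $\Delta_X$ in at most the base point, and paste via the inclusion $cl_{\textbf{Y}}(\textbf{U})\cap\pr^{-1}(V)\subseteq\textbf{U}$, which does hold by the case analysis you give) is a complete and correct version of exactly the routine verification the authors leave to the reader.
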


	Now, fix  points $+\infty$ and $-\infty$ that do not belong to $\textbf{Y}$. On the set $\textbf{Y}^*= \textbf{Y} \cup\{-\infty, +\infty\}$ we introduce the following topology. Let open sets in $\textbf{Y}$ be open in $\textbf{Y}^*$, too. But  the sets  $$\mathcal V^+_m=\{+\infty\} \cup \bigcup\{\textbf{Y}_n: n\geqslant m\}\cup \bigcup \{\Delta_\textbf{Y}(n): n>m\}$$  form a base  at the point  $ +\infty $ and  the sets  $$\mathcal V^-_m=\{-\infty\} \cup \bigcup\{\textbf{Y}_n: n\leqslant m\}\cup \bigcup \{\Delta_\textbf{Y}(n): n<m\}$$  form a base  at the point  $ -\infty .$ Thus we have 
	$$\Delta_\textbf{Y} (m) \subseteq  cl_{\textbf{Y}^*}(\mathcal V^+_m) \cap cl_{\textbf{Y}^*}(\mathcal V^-_m)= \Delta_\textbf{Y} (m) \cup \textbf{Y}_m,$$ which implies  the following. 
	\begin{theorem}\label{9} If $f:\textbf{Y}^*\to \mathbb R$ is a continuous real valued function, then $f(+\infty)=f(-\infty)$.   
	\end{theorem}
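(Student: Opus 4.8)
The plan is to argue by contradiction, suppose $f(+\infty)=p\neq q=f(-\infty)$, and assume without loss of generality that $p<q$. The key preparation is to fix \emph{in advance} a strictly increasing sequence of reals $c_0<c_1<c_2<\cdots$ lying in $(p,q)$ and converging to $\frac{p+q}{2}$, and to set $U_j=(-\infty,c_j)$ and $W=(\frac{p+q}{2},\infty)$. Then each $U_j$ is an open neighborhood of $p$, $W$ is an open neighborhood of $q$, one has $cl_{\mathbb R}(U_j)=(-\infty,c_j]\subseteq U_{j+1}$, and $U_j\cap W=\emptyset$ for every $j$. The purpose of this infinite nested family $U_0\subseteq cl_{\mathbb R}(U_0)\subseteq U_1\subseteq\cdots$ is to absorb exactly one closure at each downward step while remaining forever disjoint from $W$; this is what lets the propagation survive arbitrarily many steps without any a priori bound.

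The core of the argument is a downward propagation driven by Proposition \ref{p7}. By continuity at $+\infty$ there is an $m_+$ with $f[\mathcal V^+_{m_+}]\subseteq U_0$, and since $\mathcal V^+_m$ contains $\Delta_\textbf{Y}(n)$ for every $n>m$, this gives $f[\Delta_\textbf{Y}(n)]\subseteq U_0$ for all $n>m_+$. Fixing $n_0=m_++1$, so that $\Delta_\textbf{Y}(n_0)\setminus f^{-1}(U_0)=\emptyset\in I_\textbf{Y}(n_0)$, I would prove by induction on $j\geqslant 0$ that
$$\Delta_\textbf{Y}(n_0-j)\setminus f^{-1}(U_j)\in I_\textbf{Y}(n_0-j).$$
The inductive step applies Proposition \ref{p7} with $k=n_0-j$ and $\textbf{V}=f^{-1}(U_j)$ (open, as $f$ is continuous), producing $\Delta_\textbf{Y}(n_0-j-1)\setminus cl_\textbf{Y}(f^{-1}(U_j))\in I_\textbf{Y}(n_0-j-1)$; one then uses $cl_\textbf{Y}(f^{-1}(U_j))\subseteq f^{-1}(cl_{\mathbb R}(U_j))\subseteq f^{-1}(U_{j+1})$ together with downward closure of the ideal to obtain the statement for $j+1$.

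To finish, invoke continuity at $-\infty$: there is $m_-$ with $f[\mathcal V^-_{m_-}]\subseteq W$, hence $f[\Delta_\textbf{Y}(n)]\subseteq W$ for all $n<m_-$, because $\mathcal V^-_m$ contains the corresponding $\Delta_\textbf{Y}(n)$. Now choose any level $k$ with $k\leqslant n_0$ and $k<m_-$, and put $j=n_0-k$. The propagation yields $\Delta_\textbf{Y}(k)\setminus f^{-1}(U_j)\in I_\textbf{Y}(k)$, while continuity at $-\infty$ yields $f[\Delta_\textbf{Y}(k)]\subseteq W$. Since $U_j\cap W=\emptyset$, no point of $\Delta_\textbf{Y}(k)$ maps into $U_j$, so $\Delta_\textbf{Y}(k)\setminus f^{-1}(U_j)$ is all of $\Delta_\textbf{Y}(k)$, forcing $\Delta_\textbf{Y}(k)\in I_\textbf{Y}(k)$ and contradicting properness of $I_\textbf{Y}(k)$.

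The main obstacle, and the reason a shorter argument fails, is precisely the indexing. Continuity controls $f$ only on the \emph{high} levels $\Delta_\textbf{Y}(n)$ near $+\infty$ and on the \emph{low} levels near $-\infty$, with no a priori relation between the thresholds $m_+$ and $m_-$; one therefore cannot simply intersect $cl_{\textbf{Y}^*}(\mathcal V^+_m)$ and $cl_{\textbf{Y}^*}(\mathcal V^-_m)$ at a common $m$. Unlike Section \ref{s1}, where $\lambda^+$-completeness let us first extract level constants $a_k$, here we assume only proper ideals, so I would instead propagate containments directly through the intervening levels via Proposition \ref{p7}. The delicate point is to arrange the open sets so that each downward step genuinely lands inside the next $U_{j+1}$ (so the induction closes) while every $U_j$ stays disjoint from $W$ (so the terminal step collapses $\Delta_\textbf{Y}(k)$ into the proper ideal); the fixed increasing sequence $c_j$ is exactly what secures both requirements.
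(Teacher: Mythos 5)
Your proposal is correct and follows essentially the same route as the paper's proof: a nested family of intervals each absorbing the closure of the previous one, downward propagation through the levels via Proposition \ref{p7}, started by continuity at $+\infty$ and terminated by the neighborhood base at $-\infty$. The only cosmetic difference is the endgame, where you contradict properness of $I_\textbf{Y}(k)$ directly instead of deducing $f(-\infty)\geqslant\frac12$; both are sound.
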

	\begin{proof} Suppose  $f: \textbf{Y}^* \to \mathbb R$ is a continuous function such that  $f(+\infty) = 1$ and  $f(-\infty) = 0.$ Fix a decreasing sequence $\{\epsilon_n\}$ which converges to $\frac12$.
	 Thus $$f^{-1}((\epsilon_n, 1]) \subseteq cl_{\textbf{Y}^*}(f^{-1}((\epsilon_n, 1])) \subseteq f^{-1}([\epsilon_n, 1]) \subseteq f^{-1}((\epsilon_{n+1}, 1]).$$ By Proposition \ref{p7}, if $K_m\in I_\textbf{Y}(m)$ and $\Delta_\textbf{Y}(m) \setminus K_m \subseteq f^{-1}((\epsilon_n, 1]),$ then 
	$$   f^{-1}((\epsilon_{n+1}, 1]) \supseteq \Delta_\textbf{Y}(m-1) \setminus K_{m-1}, $$
		for  some  $K_{m-1}\in I_\textbf{Y}(m-1).$  Since there exists $m \in \mathbb Z$ such that $+\infty \in \mathcal V^+_m \subseteq f^{-1}((\epsilon_0, 1]) $, inductively,  we get $$\Delta_{\textbf{Y}}\setminus \bigcup\{K_n: n\in \mathbb Z \} \subseteq f^{-1}([\frac12, 1]), $$  which implies that each $ \mathcal V^-_n$ contains a point $\textbf{y}\in \textbf{Y} $ such that $f(\textbf{y})\geqslant \frac12.$ Hence $f(-\infty) \geqslant \frac12$, a contradiction.    \end{proof}

	\subsection{Application of the Niemytzki plane}
	
	Recall that the Niemytzki plane $\mathbb P = \{(a,b) \in \mathbb R \times \mathbb R: 0\leqslant b \}$  is the closed half-plane which is endowed with the topology generated by  open discs disjoint with the real axis $\Delta_{\mathbb P}=\{(x,0): x \in \mathbb R\}$ and all sets of the form $\{a\} \cup D$ where $D\subseteq \mathbb P$ is an open disc which is tangent to $\Delta_{\mathbb P}$ at the point $a \in \Delta_{\mathbb P}$.
Choose  pairwise disjoint subsets $\Delta_{\mathbb P} (k)\subseteq \Delta_{\mathbb P}$, where $k\in \mathbb Z$, such that each set $\Delta_{\mathbb P} (k)$ meets every dense $G_\delta$ subset of the real axis. 
	To do that is enough to slightly modify the classic construction of a Bernstein set. Namely, fix an enumeration $\{A_\alpha: \alpha < \frak{c}\}$	of all dense $G_\delta$ subsets of the real axis. Defining inductively at step $\alpha$ choose  a (1-1)-numerated subset $\{p_k^\alpha : k \in \mathbb Z\}\subseteq A_\alpha \setminus \{p_k^\beta :  k \in \mathbb Z \mbox{ and } \beta < \alpha \}$.  Then,   for each   $k \in \mathbb Z, $ put $\Delta_{\mathbb P} (k) = \{p^\alpha_k: \alpha < \frak{c} \}.$
	
Let us assume that if $F \subseteq \mathbb R \times \mathbb R$, then the topology on $F$ induced  from the Euclidean topology will be  called the \textit{natural topology} on $F$. A set, which is a countable union of nowhere dense subsets  in the natural topology on  $F $,  will be called \textit{a set of first  category} in $F$.   Our proof of the following lemma is a modification of known reasoning justifying that $\mathbb P$ is not a normal space, compare \cite[pp. 101 -102]{ss}.
	\begin{lem}\label{7}  Let a set $F \subseteq \Delta_{\mathbb P}$ be a dense subset in the natural topology on the real axis $\Delta_{\mathbb P}$.  If a set $V$ is open in $\mathbb P$ and $F \subseteq V$, then the set $\Delta_{\mathbb P} \setminus cl_{\mathbb P}(V)$ is of first category in $\Delta_{\mathbb P}$.   \end{lem}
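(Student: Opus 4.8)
The plan is to mimic the classical Baire-category argument that witnesses the non-normality of $\mathbb P$, replacing the two closed sets appearing there by $F$ and the complement $\Delta_{\mathbb P}\setminus cl_{\mathbb P}(V)$. First I would record the elementary geometric fact about tangent discs which is the engine of the whole argument: if $D$ is the open disc tangent to $\Delta_{\mathbb P}$ at $(a,0)$ with radius $r$, and $E$ is the open disc tangent at $(b,0)$ with radius $s$, then $D\cap E=\emptyset$ if and only if $|a-b|\geq 2\sqrt{rs}$. Indeed, the centres are $(a,r)$ and $(b,s)$, so disjointness of the open discs amounts to $(a-b)^2+(r-s)^2\geq (r+s)^2$, which simplifies to $(a-b)^2\geq 4rs$.

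Next I would extract the discs from the hypothesis. For every point $(a,0)\in F$, since $V$ is open and contains $(a,0)$, I fix a basic neighborhood $\{(a,0)\}\cup D_a\subseteq V$, where $D_a$ is a tangent disc of some radius $r(a)>0$. Writing $B=\Delta_{\mathbb P}\setminus cl_{\mathbb P}(V)$, for every point $(b,0)\in B$ I fix a basic neighborhood $\{(b,0)\}\cup E_b$ disjoint from $V$, so that $E_b$ is a tangent disc of some radius $s(b)>0$ with $E_b\cap V=\emptyset$. Since $D_a\subseteq V$, the discs $D_a$ and $E_b$ are disjoint for all relevant $a$ and $b$, and hence $|a-b|\geq 2\sqrt{r(a)\,s(b)}$.

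Then I would stratify $B$ by the size of the separating radius: put $B_n=\{(b,0)\in B:\ s(b)\geq \tfrac1n\}$, so that $B=\bigcup_{n>0}B_n$. It suffices to show that each $B_n$ is nowhere dense in the natural topology on $\Delta_{\mathbb P}$, for then $B$ is of first category. Suppose, towards a contradiction, that some $B_n$ is somewhere dense, i.e. the Euclidean closure of $\{b:(b,0)\in B_n\}$ contains a nonempty open interval $J$. Because $F$ is dense in the axis, I can pick a point $(a,0)\in F$ with $a\in J$; then $a$ lies in the closure of $B_n$, so there are points $(b,0)\in B_n$ with $|a-b|$ arbitrarily small. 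But the inequality above forces $|a-b|\geq 2\sqrt{r(a)\,s(b)}\geq 2\sqrt{r(a)/n}$, a strictly positive constant independent of $b$. This contradiction finishes the argument.

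I expect the only genuinely delicate point to be the reduction to nowhere-dense strata: one must be careful that the radius $r(a)$ attached to the chosen point $(a,0)\in F\cap J$ is fixed \emph{before} letting $b$ vary, so that $2\sqrt{r(a)/n}$ really is a uniform positive lower bound for the distances $|a-b|$. Everything else is routine bookkeeping; the geometric disjointness inequality together with the density of $F$ do all the work.
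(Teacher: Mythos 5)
Your proposal is correct and follows essentially the same route as the paper's proof: assign tangent discs avoiding $V$ to the points of $\Delta_{\mathbb P}\setminus cl_{\mathbb P}(V)$, stratify by radius into the sets $S_n$ (your $B_n$), and use the density of $F$ together with the tangent-disc disjointness inequality to see each stratum is nowhere dense. You merely spell out the details (the inequality $|a-b|\geqslant 2\sqrt{rs}$ and the choice of a fixed disc $D_a\subseteq V$ at each point of $F$) that the paper leaves to the reader.
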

	\begin{proof} To each point $a\in \Delta_{\mathbb P}\setminus cl_{\mathbb P}(V)$ there corresponds a disc $D_a \subseteq \mathbb P \setminus cl_{\mathbb P}(V)$ of radius $r_a$ tangent to $\Delta_{\mathbb P}$ at the point $a$. Put $$S_n=\{
	a\in \Delta_{\mathbb P} \setminus cl_{\mathbb P}(V): r_a \geqslant \frac1n \}  $$
and  use  density of  $F$ to check that each $S_n$ is nowhere dense in the natural topology on $\Delta_{\mathbb P}$. So $\bigcup \{S_n: n>0\}=\Delta_{\mathbb P} \setminus cl_{\mathbb P}(V).$ \end{proof}

 The space $\textbf{Y}_{\mathbb P}$ is completely regular. The subspaces $\textbf{Y}_{\mathbb P}\cup \{-\infty\}$, $\textbf{Y}_{\mathbb P}\cup \{+\infty\}$ and and the space $\textbf{Y}_{\mathbb P}^*$ are regular. Moreover, if $f:\textbf{Y}^*_{
\mathbb P}\to \mathbb R$ is a continuous real valued function, then $f(+\infty)=f(-\infty)$.

\subsection{Application of the Songefrey  plane,} i.e. application of the Songenfrey's half-open  square topology. 
 
Recall that the Songefrey plane $\mathbb S = \{(a,b):  a\in \mathbb R \mbox{ and } b  \in \mathbb R  \}$  is the plane  endowed with the topology generated by  rectangles of the form 
$[a,b) \times [c,d)$. Let $\Delta_{\mathbb S}=\{(x,-x): x \in \mathbb R\}$. 
Since $\Delta_{\mathbb S}$ with the topology  induced  from the Euclidean topology is homeomorphic with the real line, we can choose  pairwise disjoint subsets $\Delta_{\mathbb S} (k)\subseteq \Delta_{\mathbb S}$  
such that each set $\Delta_{\mathbb S} (k)$ meets every dense $G_\delta$ subset of $\Delta_{\mathbb S}$. 
The following lemma can be proved be the second category argument used previously in the proof Lemma \ref{7}, so we omit it, compare also  \cite[pp. 103 -104]{ss}.
	\begin{lem}\label{8}  Let a set $F \subseteq \Delta_{\mathbb S}$ be a dense subset in the topology on $\Delta_{\mathbb S}$ which is inherited from the Euclidean topology.  If a set $V$ is open in $\mathbb S$ and $F \subseteq V$, then the set $\Delta_{\mathbb S} \setminus cl_{\mathbb S}(V)$ is of first category in $\Delta_{\mathbb S}$.   \hfill $\Box$ \end{lem}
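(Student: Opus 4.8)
The plan is to mirror the proof of Lemma \ref{7}, replacing tangent discs by half-open Sorgenfrey squares and exploiting the orientation of the anti-diagonal. Identify $\Delta_{\mathbb S}$ with $\mathbb R$ via $(x,-x)\leftrightarrow x$, so that a basic neighbourhood of $(x,-x)$ has the form $[x,x+r)\times[-x,-x+r)$ and meets $\Delta_{\mathbb S}$ only at $(x,-x)$. For each $a=(x,-x)\in\Delta_{\mathbb S}\setminus cl_{\mathbb S}(V)$ there is a basic square disjoint from $V$; shrinking it to equal sides I fix $Q_a=[x,x+r_a)\times[-x,-x+r_a)$ with $Q_a\cap V=\emptyset$. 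Setting $S_n=\{a\in\Delta_{\mathbb S}\setminus cl_{\mathbb S}(V):r_a\geqslant\frac1n\}$ gives $\Delta_{\mathbb S}\setminus cl_{\mathbb S}(V)=\bigcup\{S_n:n>0\}$, so it suffices to show that each $S_n$ is nowhere dense in the natural topology on $\Delta_{\mathbb S}$.

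The heart of the argument is the nowhere density of $S_n$, and here the density of $F$ enters exactly as in Lemma \ref{7}. Suppose toward a contradiction that $S_n$ is dense in some Euclidean interval $J$ of $\Delta_{\mathbb S}$. Since $F$ is dense, I can pick a point $f=(t_0,-t_0)\in F$ with $t_0$ interior to $J$; because $f\in F\subseteq V$ and $V$ is open, some Sorgenfrey square $[t_0,t_0+s)\times[-t_0,-t_0+s)\subseteq V$. Using density of $S_n$ in $J$, I then choose $a=(x,-x)\in S_n$ with $t_0-\min(s,\frac1n)<x<t_0$. The first coordinates overlap because $t_0<x+\frac1n\leqslant x+r_a$, and the second coordinates overlap because $x>t_0-s$ forces $-x\in[-t_0,-t_0+s)$; indeed the point $(t_0,-x)$ lies in both squares. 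This contradicts $Q_a\cap V=\emptyset$ together with $[t_0,t_0+s)\times[-t_0,-t_0+s)\subseteq V$. Thus $S_n$ is dense in no interval, i.e.\ nowhere dense.

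The only delicate point---and the step I expect to require the most care---is the bookkeeping of the anti-diagonal's orientation: one must place $a$ strictly to the left of $t_0$ so that $-x>-t_0$ still lands inside the vertical strip $[-t_0,-t_0+s)$ of $f$'s square, which is what makes the two half-open squares overlap. This is the Sorgenfrey analogue of the elementary fact, implicit in Lemma \ref{7}, that two tangent discs of radius at least $\frac1n$ based at points closer than $\frac2n$ must meet. Once nowhere density of every $S_n$ is established, $\Delta_{\mathbb S}\setminus cl_{\mathbb S}(V)=\bigcup\{S_n:n>0\}$ is a countable union of nowhere dense sets, hence of first category in $\Delta_{\mathbb S}$, completing the proof.
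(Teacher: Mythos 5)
Your proof is correct and is exactly the argument the paper has in mind: the paper omits the proof of Lemma \ref{8}, stating only that it follows by the same second-category argument as Lemma \ref{7}, and your adaptation (corner squares $[x,x+r_a)\times[-x,-x+r_a)$ in place of tangent discs, the decomposition into the sets $S_n$, and the overlap computation using a point of $F$ near a point of $S_n$) carries that argument out correctly, including the correct handling of the orientation of the half-open squares along the anti-diagonal.
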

	
Again,
 the space $\textbf{Y}_{\mathbb S}$ is completely regular. The subspaces $\textbf{Y}_{\mathbb S}\cup \{-\infty\}$, $\textbf{Y}_{\mathbb S}\cup \{+\infty\}$ and and the space $\textbf{Y}_{\mathbb S}^*$ are regular. Moreover, if $f:\textbf{Y}^*_{
\mathbb S}\to \mathbb R$ is a continuous real valued function, then $f(+\infty)=f(-\infty)$.

\subsection{Applications of some $\Psi$-spaces} 

Let us recall some notions needed to define a Lusin gap, compare \cite{luz}. A family of sets is called \textit{almost disjoint}, whenever any
two members of it  have the finite intersection.
A set $C$ \textit{separates} two families, whenever each member
of the first family  is almost contained in $C$, i.e. $B \setminus  C$  is finite for any $B \in Q$, and
each member of the other family is almost disjoint with $C$.  An uncountable  family $\mathcal L$, which consists of almost disjoint and infinite subsets of $\omega$, 
  is called \textit{Lusin-gap,} whenever
 no two its uncountable and disjoint   subfamilies can be separated by a subset of $\omega$.  Adapting concepts discussed in \cite{mro} or \cite{hh},   to a Lusin-gap $\mathcal L$,  let $ \Psi(\mathcal L) = \mathcal L \cup \omega$. A topology on $\Psi(\mathcal L)$ is generated as  follows. Any subset of $\omega$   is open, also  for each point $A\in \mathcal L$  the sets $\{ A\} \cup A \setminus F$, where $F$ is  finite, are open.   

\begin{pro} If $\mathcal L$ is a Lusin-gap and  $\bigcup \{ \Delta_{\mathcal L}(k): k\in \mathbb Z \} = \mathcal L$,  then the space  $\Psi(\mathcal L)$ satisfies the property $(*)$, whenever  sets $\Delta_\mathcal L(k) $ are uncountable and pairwise disjoint and each ideal $I_{\mathcal L}(k)$ consists of all countable subsets of $\Delta_{\mathcal L}(k)$.   
\end{pro}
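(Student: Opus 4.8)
The plan is to verify the property $(*)$ directly for $\Psi(\mathcal L)$, using the defining feature of a Lusin-gap that no two uncountable disjoint subfamilies can be separated by a subset of $\omega$. Recall that $(*)$ requires: if $V \subseteq \Psi(\mathcal L)$ is open and $\Delta_{\mathcal L}(k) \setminus V \in I_{\mathcal L}(k)$ (i.e. is countable), then $\Delta_{\mathcal L}(k-1) \setminus cl_{\Psi(\mathcal L)}(V) \in I_{\mathcal L}(k-1)$ (i.e. is countable). So I would assume $V$ is open, that all but countably many points of $\Delta_{\mathcal L}(k)$ lie in $V$, and argue by contradiction that only countably many points of $\Delta_{\mathcal L}(k-1)$ can avoid $cl(V)$.

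First I would set up the contradiction: suppose $\Delta_{\mathcal L}(k-1) \setminus cl(V)$ is uncountable. For each point $A \in \Delta_{\mathcal L}(k-1) \setminus cl(V)$, since $A$ has basic neighborhoods of the form $\{A\} \cup (A \setminus F)$ with $F$ finite, the assumption $A \notin cl(V)$ gives a finite $F_A$ with $(\{A\} \cup (A \setminus F_A)) \cap V = \emptyset$; in particular $A \setminus F_A$ is almost disjoint from $V \cap \omega$. Call this uncountable subfamily $\mathcal Q_{-} \subseteq \Delta_{\mathcal L}(k-1)$. On the other side, let $\mathcal Q_{+} = \Delta_{\mathcal L}(k) \cap V$, which is co-countable in $\Delta_{\mathcal L}(k)$ and hence uncountable. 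For each $B \in \mathcal Q_{+}$, openness of $V$ around $B$ gives a finite $G_B$ with $\{B\} \cup (B \setminus G_B) \subseteq V$, so $B \setminus G_B \subseteq V \cap \omega$, meaning every member of $\mathcal Q_{+}$ is almost contained in the set $C := V \cap \omega$.

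The key step is then to observe that $C = V \cap \omega$ separates the two uncountable disjoint subfamilies $\mathcal Q_{+}$ and $\mathcal Q_{-}$: each member of $\mathcal Q_{+}$ is almost contained in $C$ (just shown), while each member of $\mathcal Q_{-}$ is almost disjoint from $C$ (since $A \setminus F_A$ misses $V \cap \omega = C$, so $A \cap C \subseteq F_A$ is finite). These subfamilies are disjoint because $\Delta_{\mathcal L}(k)$ and $\Delta_{\mathcal L}(k-1)$ are disjoint, and both are uncountable. This contradicts the defining property of the Lusin-gap, completing the argument.

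The main obstacle I anticipate is bookkeeping the "almost" conditions correctly, namely confirming that the single set $C = V \cap \omega$ simultaneously witnesses almost-containment for $\mathcal Q_{+}$ and almost-disjointness for $\mathcal Q_{-}$; the subtlety is that each point $B \in \mathcal L$ is itself an infinite subset of $\omega$, so one must keep straight when a neighborhood of $B$ is being used to locate points of $\omega$ inside or outside $V$, rather than points of $\mathcal L$. Once the translation from "open/closure in $\Psi(\mathcal L)$" into "almost contained / almost disjoint with respect to $V \cap \omega$" is made precise, the Lusin-gap hypothesis delivers the contradiction immediately.
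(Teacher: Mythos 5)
Your proof is correct and follows essentially the same route as the paper: both reduce $(*)$ to the defining non-separation property of the Lusin gap, taking the uncountable disjoint subfamilies $\Delta_{\mathcal L}(k)\cap V$ and $\Delta_{\mathcal L}(k-1)\setminus cl(V)$ and exhibiting a separating subset of $\omega$ built from $V$. Your witness $C=V\cap\omega$ is in fact a slightly cleaner choice than the paper's union of all members almost contained in $V$, since it makes the almost-disjointness of the second family from $C$ immediate.
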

\begin{proof}
 Consider     uncountable and disjoint families  $\mathcal{  A, B} \subseteq \mathcal L$.  Suppose $\mathcal A \subseteq V$ and    $\mathcal B \subseteq W,$ where open sets $V$ and $ W$ are disjoint.   Let $$ C= \bigcup \{A \subseteq \omega: \{A\} \cup A  \mbox{ is almost contained in   } V \}.  $$ The set $C$ separates families $\mathcal{  A}$ and $\mathcal  B$, which  contradicts that  $\mathcal L$ is a Lusin-gap. Setting $\mathcal{  A} = \Delta_{ \mathcal L} (k)$ and $\mathcal{  B} = \Delta_{ \mathcal L}(k-1)$, we are done.
 \end{proof} 

 The space $\textbf{Y}_{\mathcal L}$ is completely regular. Again by Theorem \ref{9}, we get the following. 
 The subspaces $\textbf{Y}_{\mathcal L}\cup \{-\infty\}$, $\textbf{Y}_{\mathcal L}\cup \{+\infty\}$ and and the space $\textbf{Y}_{\mathcal L}^*$ are regular. Moreover, if $f:\textbf{Y}^*_{
\mathcal L}\to \mathbb R$ is a continuous real valued function, then $f(+\infty)=f(-\infty)$.

\section{Comment}

In \cite{jon}, F. B. Jones formulated the following problem: \textit{Does a non-completely regular space always contain a substructure similar to that possessed by $Y$}? Jones' space $Y$
is constructed by gluing (sewing) countably many disjoint copies of a suitable space $X$. This method fixes two subsets of $X$ and consists in sewing alternately  copies of either of them. On the other hand, our method  consists in gluing different sets  at each step.   The problem of Jones may be understood as an incentive to study the structural diversity of regular spaces, which are not completely regular. Even though the meaning of  "a substructure similar to that possessed by $Y$" seems vague, we think that  an appropriate criterion  for the aforementioned diversity is a slightly modified concept of a topological rank, compare \cite{kur} or \cite{sie}.  We have introduced regularity ranks, but our counterexamples  are only a preliminary step to the study of  diversity of regular spaces.

\end{document}